\theoremstyle{plain}
\newtheorem*{theorem*}{Theorem}
\newtheorem*{lemma*} {Lemma}
\newtheorem*{corollary*} {Corollary}
\newtheorem*{proposition*}{Proposition}
\newtheorem*{conjecture*}{Conjecture}
\newtheorem*{theorem1*}{Theorem 1}
\newtheorem*{theorem2*}{Theorem 2}
\newtheorem*{theorem3*}{Theorem 3}
\theoremstyle{remark}
\newtheorem{example*}{Example}
\theoremstyle{definition}
   \def\Z{\Bbb{Z}}  
    \def\bp{\begin{pmatrix}}
 \def\ep{\end{pmatrix}} \def\bn{\begin{enumerate}} 
   \def\en{\end{enumerate}}
\def\ba{\begin{array}} \def\ea{\end{array}}
\def\be{\begin{equation}} \def\ee{\end{equation}}
\begin{document}
\title[Thompson's Group F is not SCY]{Thompson's Group F is not SCY}
\author{Stefan Friedl}
\address{Fakult\"at f\"ur Mathematik\\ Universit\"at Regensburg\\   Germany}
\email{sfriedl@gmail.com}

\author{Stefano Vidussi}
\address{Department of Mathematics, University of California,
Riverside, CA 92521, USA} \email{svidussi@math.ucr.edu} \thanks{S. Vidussi was partially supported by NSF grant
\#0906281.}

\begin{abstract} In this note we prove that Thompson's group $F$ cannot be the fundamental group of a symplectic $4$--manifold $M$ with canonical class $K = 0 \in H^{2}(M)$ by showing that its Hausmann--Weinberger invariant $q(F)$ is strictly positive.

\end{abstract}
\maketitle


\vspace{-0.55cm}
Symplectic $4$--manifolds with trivial canonical class, oftentimes referred to as \textit{symplectic Calabi--Yau} manifolds are, conjecturally, a fairly restricted class of manifolds, see \cite{Li06a,Do08}. Part of this restriction is reflected in known constraints for their fundamental groups, that we will refer to as \textit{SCY groups}. In the case of $b_{1} > 0$, these results, for which  we refer to \cite{Bau08,Li06b,FV13}, corroborate the expectation that such groups are  (virtually) poly--$\Z$.

We are interested here in the following constraints, that apply to the fundamental group $G = \pi_{1}(M)$ of a symplectic Calabi--Yau $4$--manifold $M$  with $b_{1}(M) = b_{1}(G) > 0$:  
\begin{enumerate} 
\item $2 \leq b_{1}(G) \leq vb_{1}(G) \leq 4$, where $vb_{1}(G) =  \mbox{sup}\{ b_{1}(G_{i}) | G_{i} \leq_{f.i.} G\}$  denotes the supremum of the first Betti number of all finite index subgroups of $G$; 
\item if the first  $L^{2}$--Betti number $b_{1}^{(2)}(G)$ vanishes, then $q(G) = 0$, where  the Hausmann--Weinberger invariant $q(G) = \mbox{inf} \{ \chi(X)| \pi_1(X) = G \}$ is defined as the infimum of the Euler characteristic among all $4$--manifolds whose fundamental group is $G$. 
\end{enumerate}
 (In \cite[Proposition 2.2]{FV13} the vanishing of $q(G)$  is stated under the assumption that $G$ is residually finite, but in fact only the condition $b_{1}^{(2)}(G) = 0$ is used in the proof.)

The purpose of this note is to apply these constraints to the case of Thompson's group $F$. The group $F$ (about which we refer to \cite{CFP96,Ge08} for some preliminary facts) is a group that admits the finite presentation \begin{equation} \label{eq:pres} F = \langle  x_{0},x_{1}|  [x_{0}x_{1}^{-1},x_{0}^{-1}x_1x_0] = 1, [x_{0}x_{1}^{-1},x_{0}^{-2}x_1x^{2}_0] = 1. \rangle \end{equation} This group has a number of peculiar features, that make it a natural testing ground for conjectures and speculations. We should mention that S. Bauer asked (\cite[Question 1.5]{Bau08}) if another of Thompson's groups, $T$ (which is a finitely presented simple group) is SCY, in this case with $b_1 = 0$: that question partly motivated the present note.

From a geometer's viewpoint, Thompson's group $F$ has already been knocked out from the royalty of groups, i.e. K\"ahler groups, by the work of \cite{NR06} (whose authors will hopefully condone us for the slight plagiarism in our title). However, as any finitely presented group, it keeps a footing as fundamental group of a symplectic $4$--manifold, by \cite{Go95} and, pushing the dimension up by $2$, of symplectic $6$--manifolds with trivial canonical class by \cite{FiPa11}. In spite of that, we will show that the constraints discussed above are sufficient to show that $F$ is not SCY. The main difficulty lies in the fact that the constraint on the first virtual Betti number, that is often very effective, is inconclusive: 
\begin{proposition*} \label{prop:virtual} Thompson's group $F$ satisfies $b_{1}(F) = vb_{1}(F) = 2$. \end{proposition*}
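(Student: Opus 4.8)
The plan is to prove the two equalities separately, the first being immediate and the second carrying all the content. Since $F$ admits the presentation \eqref{eq:pres} on two generators whose relators are commutators, abelianizing kills the relators and yields $H_1(F;\Z) = F^{ab} \cong \Z^2$; hence $b_1(F) = 2$. As $F$ is a finite index subgroup of itself, this already gives $vb_1(F) \geq 2$, so the substance of the statement is the reverse inequality $vb_1(F) \leq 2$, i.e. that \emph{no} finite index subgroup of $F$ can have larger first Betti number.

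The structural input I would invoke is the classical fact (see \cite{CFP96}) that the commutator subgroup $F' := [F,F]$ is an infinite, non-abelian simple group; being non-abelian and simple it is perfect, so $H_1(F';\Z) = 0$. Using this, the first step is to show that every finite index subgroup $H \leq_{f.i.} F$ contains $F'$. Passing to the normal core I may assume there is a finite index normal subgroup $N \trianglelefteq F$ with $N \subseteq H$. Then $N \cap F'$ is normal in $F'$, so by simplicity it is either trivial or all of $F'$; it cannot be trivial, for otherwise $F'$ would inject into the finite group $F/N$, contradicting that $F'$ is infinite. Hence $F' = N \cap F' \subseteq N \subseteq H$.

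Granting this, I would compute $b_1(H)$ for an arbitrary $H \leq_{f.i.} F$. Since $F' \subseteq H \subseteq F$ and $F/F' \cong \Z^2$, the quotient $Q := H/F'$ is a finite index subgroup of $\Z^2$, hence $Q \cong \Z^2$. Applying the five-term exact sequence in homology to the extension $1 \to F' \to H \to Q \to 1$ with trivial $\Z$--coefficients gives
\[ H_2(Q) \to (H_1(F';\Z))_Q \to H_1(H;\Z) \to H_1(Q;\Z) \to 0, \]
and since $H_1(F';\Z) = 0$ the coinvariant term $(H_1(F';\Z))_Q$ vanishes, forcing an isomorphism $H_1(H;\Z) \cong H_1(Q;\Z) \cong \Z^2$. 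Thus $b_1(H) = 2$ for every finite index subgroup $H$, and taking the supremum yields $vb_1(F) = 2$.

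The only nontrivial ingredient is the simplicity (and infiniteness) of $F'$; once this is granted the remainder is routine homological algebra. I expect the crux to be precisely the first step — that finite index forces containment of $F'$ — since this is where simplicity is used to close off the only route by which a finite index subgroup could detect extra first homology, namely homology emanating from $F'$ itself.
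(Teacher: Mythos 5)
Your proof is correct and takes essentially the same approach as the paper: both arguments hinge on the simplicity of $[F,F]$ to force every finite index subgroup of $F$ to have first homology exactly $\Z^2$. The paper applies simplicity to $[N,N]\trianglelefteq [F,F]$ for $N$ normal of finite index (concluding $[N,N]=[F,F]$, which is equivalent to your containment $F'\subseteq N$), rules out the degenerate case via $F$ not being virtually abelian where you use infiniteness of $F'$, and replaces your five-term sequence by the direct identification $H_1(N)=N/[F,F]\leq \Z^2$ combined with monotonicity of $b_1$ under passage to finite index subgroups.
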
 
\begin{proof} This is a consequence of the fact that (\cite[Theorems 4.5]{CFP96}) the commutator subgroup $[F,F]$ is simple. Indeed, let $N\trianglelefteq_f F$ be a finite index normal subgroup. Then $[N,N]$ is a normal subgroup of $[F,F]$.
Since $[F,F]$ is simple (and, as $F$ is not virtually abelian, $N$ is not abelian) it follows that  $[N,N]=[F,F]$. We therefore see that $H_1(N) = N/[N,N]=N/[F,F]$ is a subgroup of $F/[F,F]\cong \Z^2$. Now, as the Betti number is non--decreasing on finite index subgroups, $b_{1}(N) \geq b_1(F)$. This entails that $H_1(N)$ is a finite index subgroup of $H_1(F)$, hence a copy of $\Z^2$ itself. \end{proof}

As the constraint on the virtual Betti number is inconclusive, we must resort to the Hausmann-Weinberger invariant $q(F)$ (whose calculation,  to the authors' knowledge, has not appeared in the literature). While we are not able to calculate it exactly, we will show that it is strictly positive, whence $F$ is not a SCY group.

\begin{theorem*} The Hausmann--Weinberger invariant of Thompson's group $F$ satisfies $0 < q(F) \leq 2$. \end{theorem*}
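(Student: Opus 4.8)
The plan is to bound $\chi(X)$ for a closed oriented $4$--manifold $X$ with $\pi_1(X)=F$ from both sides, using that $b_1(X)=b_1(F)=2$ (by the Proposition above), so that $\chi(X)=2-2b_1(X)+b_2(X)=b_2(X)-2$. Thus $q(F)\le 2$ amounts to building one such $X$ with $b_2(X)=4$, while $q(F)>0$ amounts to the a priori estimate $b_2(X)\ge 3$ for \emph{every} such $X$. For the upper bound I would use the surgery construction attached to the $2$--generator, $2$--relator presentation \eqref{eq:pres}: start from $M_0=\#_2(S^1\times S^3)$, which has $\pi_1(M_0)$ free of rank $2$ and $\chi(M_0)=-2$, represent the two relators by disjoint embedded circles, and surger each one (replacing a neighbourhood $S^1\times D^3$ by $D^2\times S^2$). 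Each surgery kills the corresponding element of $\pi_1$ and raises $\chi$ by $2$, so the result $X$ has $\pi_1(X)=F$ and $\chi(X)=-2+2\cdot 2=2$; since the relators are commutators, $b_1(X)=2$ is preserved. Hence $q(F)\le 2$.

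For the lower bound, I first observe that $L^2$--input alone is insufficient: as $b_1^{(2)}(F)=0$ one only obtains $\chi(X)=b_2^{(2)}(X)\ge 0$, so the strict inequality must come from the ordinary cohomology ring of $F$. Let $c\colon X\to BF$ be the classifying map. Factoring $c$ as the inclusion of $X$ into a space obtained by attaching cells of dimension $\ge 3$ shows $c^*\colon H^2(F;\Q)\to H^2(X;\Q)$ is injective, so $b_2(X)\ge b_2(F)$, and invoking Brown--Geoghegan's computation $H_n(F;\Z)=\Z^2$ ($n\ge 1$) gives $b_2(F)=2$ and hence $b_2(X)\ge 2$. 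To upgrade this to $b_2(X)\ge 3$, consider the non--degenerate intersection form $Q_X$ on $H^2(X;\Q)$ (Poincaré duality). On the $2$--dimensional subspace $W=c^*H^2(F;\Q)$ one has $Q_X(c^*\bar u,c^*\bar v)=\langle \bar u\cup\bar v,\,c_*[X]\rangle$, i.e. $Q_X|_W$ is the pullback, along $c_*[X]\in H_4(F;\Q)$, of the cup pairing $H^2(F;\Q)\otimes H^2(F;\Q)\to H^4(F;\Q)$. The decisive point is that this restricted form is \emph{degenerate}: the class $\alpha\cup\beta$ (with $\alpha,\beta$ spanning $H^1(F;\Q)$) is nonzero and lies in the radical of the $F$--pairing, hence $c^*(\alpha\cup\beta)$ is a nonzero radical vector of $Q_X|_W$; since $Q_X$ is non--degenerate on $H^2(X;\Q)$, some class must pair nontrivially with it, necessarily lying outside $W$, forcing $\dim H^2(X;\Q)\ge \dim W+1=3$. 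Therefore $b_2(X)\ge 3$, $\chi(X)\ge 1$, and $q(F)\ge 1>0$.

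The main obstacle is exactly the claim that $\alpha\cup\beta$ lies in the radical of the cup pairing on $H^2(F;\Q)$, i.e. that $\alpha\cup\beta\cup\eta=0$ in $H^4(F;\Q)$ for every $\eta$. One half is formal, since $\alpha^2=\beta^2=0$ makes $\alpha\cup\beta$ isotropic; the genuine content is $\alpha\cup\beta\cup\gamma=0$ for the second generator $\gamma$ of $H^2(F;\Q)$. This is \emph{not} a general property of groups surjecting onto $\Z^2$ (it fails for direct products), so the proof must exploit the specific structure of $F$. I would establish it via the Lyndon--Hochschild--Serre spectral sequence of $1\to[F,F]\to F\to\Z^2\to 1$: here $\alpha\cup\beta=p^*[\mathrm{vol}]$ is pulled back from the top class of $\Z^2$, so $\alpha\cup\beta\cup\gamma$ sits in the top filtration and its associated graded is the image of $\gamma$ under the cup--with--$[\mathrm{vol}]$ map $H^2([F,F];\Q)^{\Z^2}\to H^2([F,F];\Q)_{\Z^2}$ from invariants to coinvariants. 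Showing that this map annihilates $\gamma$ — using the description of $H_*([F,F];\Q)$ as a $\Z^2$--module (with the simplicity of $[F,F]$ already forcing $H_1([F,F];\Q)=0$, which is what makes $\alpha\cup\beta$ survive and guarantees $b_2(F)=2$) — is where the real work lies, and is the step I expect to be hardest to make rigorous.
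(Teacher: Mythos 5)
Your upper bound is fine, and it is in substance the paper's: surgering out the two relators of \eqref{eq:pres} from $\#_2(S^1\times S^3)$ is exactly the construction behind the standard inequality $q(F)\le 2-2\,\mbox{def}(F)=2$. Your first lower-bound step is also fine: $c^*\colon H^2(F;\Q)\to H^2(X;\Q)$ is injective and $b_2(F)=2$, so $b_2(X)\ge 2$ and $q(F)\ge 0$ (a nice way to get non-negativity without $L^2$--theory). But the step you flagged as the hard one is not merely hard: it is \emph{false}, so the passage from $b_2(X)\ge 2$ to $b_2(X)\ge 3$ collapses. In fact $\alpha\cup\beta\cup\gamma\neq 0$ in $H^4(F;\Q)$, because $F$ contains, rationally, exactly the direct-product configuration you said would break the argument. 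Concretely: for a dyadic interval $[a,b]\subset(0,1)$ let $F_{[a,b]}\le F$ denote the elements supported in $[a,b]$. An element of $F$ with slope $1$ at $0$ and at $1$ is the identity near $0$ and $1$, so $[F,F]$ is the directed union of the $F_{[a,b]}$. In the LHS spectral sequence of $1\to[F,F]\to F\to\Z^2\to 1$ with $\Q$--coefficients, perfectness of $[F,F]$ gives $E_2^{1,1}=0$ and $E_\infty^{2,0}=E_2^{2,0}=\Q$, so the kernel of the restriction $H^2(F;\Q)\to H^2([F,F];\Q)$ is precisely the line $\Q\,(\alpha\cup\beta)$; hence $\gamma$ restricts nontrivially to $[F,F]$. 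Since group homology commutes with directed unions and $H^2(-;\Q)=\Hom(H_2(-;\Q),\Q)$, there are a dyadic $[a,b]$ and a class $w\in H_2(F_{[a,b]};\Q)$ with $\langle \gamma|_{F_{[a,b]}},w\rangle\neq 0$. Now choose $f_0,f_1\in F$ supported in $[0,a]$ and $[b,1]$ with nontrivial slopes at $0$ and at $1$ respectively; disjointness of supports yields an embedding $j\colon \Z^2\times F_{[a,b]}\hookrightarrow F$ under which $j^*\alpha$, $j^*\beta$ are (nonzero multiples of) the classes dual to the two $\Z$--factors and the $H^0\otimes H^2$ K\"unneth component of $j^*\gamma$ is $\gamma|_{F_{[a,b]}}$. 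Therefore
\[ \langle \alpha\cup\beta\cup\gamma,\; j_*([T^2]\times w)\rangle \;=\; \pm\, c\,\langle \gamma|_{F_{[a,b]}},\,w\rangle \;\neq\; 0, \qquad c=\log_2 f_0'(0)\cdot \log_2 f_1'(1)\neq 0 .\]
So $\alpha\cup\beta$ is not in the radical of the cup pairing (equivalently, the invariants-to-coinvariants map you hoped would kill $\gamma$ does not), $c^*(\alpha\cup\beta)$ need not be a radical vector of $Q_X|_W$, and no bound $b_2(X)\ge 3$ follows.

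This failure is structural rather than technical: the subgroups $\Z^2\times F_{[a,b]}$ show that in degrees $\le 4$ the rational cohomology ring of $F$ multiplies like that of a product of a torus with something carrying $\gamma$, so no obstruction argument based purely on the rational cup product of $F$ can rule out $\chi=0$. The paper obtains the strict inequality from a completely different kind of input, namely cohomology with $\Z[F]$--coefficients: first $b_1^{(2)}(F)=0$ (via the ascending HNN description of $F$ and \cite[Lemma 2.1]{Hi02}) gives $\chi(M)=b_2^{(2)}(M)\ge 0$ for any $M$ with $\pi_1(M)=F$; then, if $\chi(M)=0$, Eckmann's theorem \cite[Theorem 6]{E97} says the only obstruction to $M$ being a $K(F,1)$ is $H^2(F;\Z[F])$, which vanishes by \cite[Theorem 13.11.1]{Ge08}, so $M$ would be aspherical and $F$ a four-dimensional Poincar\'e duality group with $H^4(F;\Z[F])=\Z$, contradicting that same vanishing theorem. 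If you want to salvage your framework, it is this $\Z[F]$--coefficient (or $L^2$) information, not the rational ring structure, that must supply the contradiction.
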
 
\begin{proof}  As is well--known (see e.g. \cite{E97}) the Hausmann--Weinberger invariant satisfies the basic inequalities $ 2 - 2b_{1}(F) \leq q(F) \leq 2 - 2 \mbox{def}(F)$, where $\mbox{def}(F)$ denotes the deficiency of $F$. The upper bound is easily obtained then from the fact that the presentation in (\ref{eq:pres}) has deficiency $0$. To prove the lower bound, we will argue by contradiction. To start, we will compute the  first $L^{2}$--Betti number. If $F$ were residually finite, the proposition, together with the L\"uck Approximation Theorem \cite{Lu94}, would immediately imply its vanishing, but as this isn't the case one must argue differently.  There is more than one way to proceed to this calculation (see \cite[Theorem 7.10]{Lu02} for the original calculation, or \cite[Theorem 1.8]{BFS12}). For the reader's benefit, we present the following, which is fairly explicit. Start with a well--known infinite presentation of the group $F$: \[  F =  \langle x_{0},x_{1},...|x_{n} x_{i} = x_{i} x_{n+1}, \forall \,\ 0 \leq i < n \rangle, \] that reduces to that in (\ref{eq:pres}) putting $x_{n} =  x_{0}^{1-n} x_1 x_{0}^{n-1}$ for all $n \geq 2$. Defining the shift monomorphism $\phi : F \to F$ as $\phi(x_{i}) = x_{i+1}$ for all $i \geq 0$, the images $F(m) = \phi^{m}(F)$ are isomorphic to $F$ itself, and $F$ is the properly ascending HNN-extension with base $F(1)$ itself, bonding subgroups $F(1)$ and $F(2)$ and stable letter $x_{0}$, i.e. \[ F = \langle F(1),x_{0}|x_{0}^{-1}F(1) x_{0} = \phi(F(1))\rangle. \] As $F$ (hence $F(1),F(2)$) admits a finite presentation, the $L^2$--Betti number $b_{1}^{(2)}(F)$ vanishes by \cite[Lemma 2.1]{Hi02}. As $F$ is an infinite group,  $b_{0}^{(2)}(F)$  vanishes as well.  Let $M$ be a $4$--manifold with fundamental group $F$. By standard facts of $L^2$-invariants (see e.g. \cite{Lu02}) we have \[ \chi(M) =   2 b_{0}^{(2)}(F) -2  b_{1}^{(2)}(F) +  b_{2}^{(2)}(M) = b_{2}^{(2)}(M) \geq 0,\] whence $q(F) \geq 0$. Assume then, by contradiction, that equality holds for some manifold $M$; by \cite[Theorem 6]{E97}  the only obstruction for $M$ to be an Eilenberg--Maclane space $K(F,1)$ is $H^{2}(F,\Z[F])$. Now for Thompson's group $F$ all cohomology groups $H^{*}(F,\Z[F])$ vanish (\cite[Theorem 13.11.1]{Ge08}), so the obstruction vanishes; but in that case $F$ would be a Poincar\'e duality group of dimension $4$, hence satisfy $H^{4}(F,\Z[F]) = \Z$, that is false by the above. \end{proof}

We observe that the result above entails that the deficiency of $F$ is actually equal to zero. However, as the homology of $F$ is known (see e.g. \cite{Br06}), this follows also from Morse inequality $\mbox{def}(F) \leq b_1(F) - b_{2}(F) = 0$ and the existence of the presentation of (\ref{eq:pres}) of deficiency $0$.

\subsection*{Acknowledgment} We thank the referee for helpful comments.



\end{document}